\numberwithin{equation}{section}
\def\polhk#1{\setbox0=\hbox{#1}{\ooalign{\hidewidth\lower1.5ex\hbox{`}\hidewidth\crcr\unhbox0}}}
\def\ba{\begin{aligned}}
\def\ea{\end{aligned}}
\def\be{\begin{equation}}
\def\ee{\end{equation}}
\def\beu{\begin{equation*}}
\def\eeu{\end{equation*}}
\def\C{\mathbb{C}}
\def\R{\mathbb{R}}
\def\S{\mathbb{S}}
\def\Z{\mathbb{Z}}
\def\!={\neq}
\theoremstyle{definition}
\newtheorem{defn}[equation]{Definition}
\newtheorem{lem}[equation]{Lemma}
\newtheorem{prop}[equation]{Proposition}
\newtheorem{thm}[equation]{Theorem}
\newtheorem{ques}[equation]{Question}
\newtheorem{cor}[equation]{Corollary}
\newtheorem{exam}[equation]{Example}
\newtheorem{conj}[equation]{Conjecture}
\newtheorem{remark}[equation]{Remark}
\begin{document}

\title{\Large Triviality of Equivariant Maps in Crossed Products and Matrix Algebras}
\author{Benjamin Passer\thanks{Partially supported by a Zuckerman Fellowship at the Technion, the Simons Foundation grant 346300, and the Polish Government MNiSW 2015-2019 matching fund. This work is part of a project sponsored by EU grant H2020-MSCA-RISE-2015-691246-QUANTUM DYNAMICS.} \\ Technion-Israel Institute of Technology \\ Haifa, Israel \\ \\ benjaminpas@technion.ac.il}

\date{}

\maketitle

\begin{abstract}
We consider a \lq\lq twisted\rq\rq\hspace{0pt} noncommutative join procedure for unital $C^*$-algebras which admit actions by a compact abelian group $G$ and its discrete abelian dual $\Gamma$, so that we may investigate an analogue of Baum-D\polhk{a}browski-Hajac noncommutative Borsuk-Ulam theory in the twisted setting. Namely, under what conditions is it guaranteed that an equivariant map $\phi$ from a unital $C^*$-algebra $A$ to the twisted join of $A$ and $C^*(\Gamma)$ cannot exist? This pursuit is motivated by the twisted analogues of even spheres, which admit the same $K_0$ groups as even spheres and have an analogous Borsuk-Ulam theorem that is detected by $K_0$, despite the fact that the objects are not themselves deformations of a sphere. We find multiple sufficient conditions for twisted Borsuk-Ulam theorems to hold, one of which is the addition of another equivariance condition on $\phi$ that corresponds to the choice of twist. However, we also find multiple examples of equivariant maps $\phi$ that exist even under fairly restrictive assumptions. Finally, we consider an extension of unital contractibility (in the sense of D\polhk{a}browski-Hajac-Neshveyev) \lq\lq modulo $k$.\rq\rq\hspace{0pt}
\end{abstract}

\vspace{.2 in}

Keywords: Borsuk-Ulam, twisted join, twisted suspension, free actions, torsion

\vspace{.2 in}

MSC2010: 46L85, 46L65

\vspace{.2 in}

\section{Introduction}\label{sec:intro}

If $X$ and $Y$ are compact Hausdorff spaces which admit free actions of a nontrivial finite group $G$, then continuous, equivariant maps from $X$ to $Y$ are severely restricted by dimension and homotopy invariants. Such claims are rooted in the Borsuk-Ulam theorem, which concerns spheres $\mathbb{S}^k$ and the free $\mathbb{Z}/2\mathbb{Z}$ action $x \mapsto -x$. In this setting, an equivariant map $f$ is just an \textit{odd} function, that is, a function that satisfies $f(-x) = -f(x)$ for each $x$ in the domain.

\begin{thm}[Borsuk-Ulam Theorem]
If $n$ is a positive integer, then the following hold.
\begin{enumerate}
\item Every continuous function from $\S^n$ to $\R^n$ admits at least one pair of opposite points $x$ and $-x$ with the same image.
\item Every continuous, odd function from $\S^n$ to $\R^n$ has a zero.
\item There is no continuous, odd function from $\S^n$ to $\S^{n-1}$.
\item Every continuous, odd function from $\S^{n-1}$ to $\S^{n-1}$ is homotopically nontrivial.
\item Every continuous, odd function from $\S^{n-1}$ to $\S^{n-1}$ has odd degree.
\end{enumerate}
\end{thm}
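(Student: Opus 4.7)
The plan is to treat the nonexistence claim (3) as the central result and derive the others from it by short reductions. The equivalence (1) $\Leftrightarrow$ (2) is formal: given continuous $f\colon \Sn \to \Rn$, the antipodal symmetrization $g(x) = f(x) - f(-x)$ is odd, and its zeros are precisely the antipodal collisions of $f$. For (2) $\Leftrightarrow$ (3), a nowhere-vanishing odd $f\colon \Sn \to \Rn$ normalizes to an odd map $\Sn \to \S^{n-1}$, while any odd $\Sn \to \S^{n-1} \subset \Rn$ is tautologically nowhere-vanishing. For (4), I would show that a null-homotopic odd $f\colon \S^{n-1} \to \S^{n-1}$ extends to an odd map $\tilde f\colon \Sn \to \S^{n-1}$ by splicing a null-homotopy on the upper hemisphere with its antipodal reflection on the lower hemisphere; oddness of $f$ guarantees compatibility on the equator, and the resulting contradiction with (3) gives (4).

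For the core claim (3), my plan is to use mod-$2$ cohomology. The free antipodal action gives double covers $\Sn \to \RP^n$ and $\S^{n-1} \to \RP^{n-1}$, and an odd $\phi\colon \Sn \to \S^{n-1}$ descends to $\overline{\phi}\colon \RP^n \to \RP^{n-1}$. Writing $w \in H^1(\RP^n;\Z/2\Z)$ and $w' \in H^1(\RP^{n-1};\Z/2\Z)$ for the generators, naturality of the first Stiefel-Whitney class of the universal double cover forces $\overline{\phi}^* w' = w$. Since $H^*(\RP^n;\Z/2\Z)$ is the truncated polynomial ring with $w^n \neq 0$, while $w'^n$ lives in the zero group $H^n(\RP^{n-1};\Z/2\Z)$, we obtain $w^n = \overline{\phi}^*(w'^n) = 0$, a contradiction. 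A variant of this argument applied to the induced self-map $\overline{f}\colon \RP^{n-1} \to \RP^{n-1}$, together with the transfer/Gysin relationship between $\deg(f) \bmod 2$ and $\overline{f}^*$ on top cohomology, yields (5).

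The main obstacle is the algebraic-topological input: specifically, that $H^*(\RP^n;\Z/2\Z)$ is a truncated polynomial ring with nonzero top class, and the Stiefel-Whitney naturality that identifies $\overline{\phi}^* w'$. Everything else reduces to formal manipulation, namely the symmetrization $g(x) = f(x) - f(-x)$, radial normalization, or oddly extending a null-homotopy across the equator.
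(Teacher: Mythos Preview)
The paper does not prove this theorem; it is stated as the classical Borsuk-Ulam theorem and used only as motivation for the noncommutative questions that follow. The sole comment the paper makes is the sentence immediately after the statement: ``Items 1--4 above are equivalent, and they follow from the stronger item 5.'' There is thus no proof to compare against.

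Your outline is a standard and correct proof. The reductions (1)$\Leftrightarrow$(2)$\Leftrightarrow$(3) are exactly the usual ones, and your argument for (3)$\Rightarrow$(4) via antipodally extending a null-homotopy over the two hemispheres is also standard and works as stated. The mod-$2$ cohomology proof of (3) through the descent to $\RP^n \to \RP^{n-1}$ and the truncated polynomial ring structure of $H^*(\RP^m;\Z/2\Z)$ is the textbook argument. The sketch for (5) via the induced self-map of $\RP^{n-1}$ and the transfer is the right idea, though it is the one place where you are genuinely only gesturing; making it precise requires the identity $\deg f \equiv \deg \overline{f} \pmod 2$ for a map of double covers, which one extracts from the transfer long exact sequence or a direct cellular computation.

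The only organizational difference worth noting is that the paper presents (5) as the root fact from which (1)--(4) follow, whereas you take (3) as central and handle (5) separately. Both viewpoints are fine: once one has (5), item (4) is immediate (odd degree is nonzero), and the chain (4)$\Rightarrow$(3)$\Rightarrow$(2)$\Rightarrow$(1) runs by the same short reductions you wrote down, with (4)$\Rightarrow$(3) obtained by restricting a hypothetical odd $\S^n\to\S^{n-1}$ to the equator and observing that the restriction extends over a hemisphere, hence is null-homotopic.
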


Items 1-4 above are equivalent, and they follow from the stronger item 5. Generalizations and consequences of the Borsuk-Ulam theorem abound in combinatorics and algebraic topology, in which more general spaces, groups, and actions replace spheres, $\mathbb{Z}/2\mathbb{Z}$, and the antipodal action. More recently, significant progress has been made in generalizing these results to noncommutative topology, opening up many new potential problems. The reason for this is twofold: first, the behavior of noncommutative $C^*$-algebras can be quite different than the picture painted by compact Hausdorff topology, and second, the noncommutative setting allows discussion of compact \textit{quantum} group actions in addition to actions of compact groups. The aim of such results \cite{yam13, tag12, bau15, self16thetasphere, dab16, selfsaturatedPUBLISHED, self17antisphere, selfjoinACCEPTED, dab16revisitedARXIV} is to restrict the existence or homotopy properties of equivariant homomorphisms between unital $C^*$-algebras.  As such, the noncommutative join constructions in \cite{dab15} play a major role in this story.

\begin{defn}[\cite{dab15}]\label{defn:joins}
Let $A$ and $B$ be unital $C^*$-algebras. Then the \textit{noncommutative join} of $A$ and $B$ is 
\beu
A \circledast B = \{f \in C([0, 1], A \otimes_\mathrm{min} B): f(0) \in \C \otimes B, f(1) \in A \otimes \C \}.
\eeu
\noindent If $(H, \Delta)$ is a compact quantum group and $\delta: A \to A \otimes H$ is a free coaction of $H$ on $A$, then the \textit{equivariant noncommutative join} of $A$ and $H$ is
\beu
A \circledast_\delta H = \{f \in C([0, 1], A \otimes_\mathrm{min} H): f(0) \in \C \otimes H, f(1) \in \delta(A)\}.
\eeu
\noindent Further, $A \circledast_\delta H$ admits a free coaction $\delta_\Delta$, which applies $\mathrm{id} \otimes \Delta$ pointwise on $A \otimes_\mathrm{min} H$.
\end{defn}

In the above definition, freeness is meant in the sense of \cite{ell00} (see also \cite{bau13PUBLISHED}). When $H$ is equal to $C(G)$ for a compact group $G$, the two joins $A \circledast C(G)$ and $A \circledast_\delta C(G)$ are isomorphic, and if $A \circledast C(G)$ is given the diagonal action of $G$, the natural choice of isomorphism is equivariant for the diagonal action and $\delta_\Delta$. That is, the equivariant join is needed precisely when the quantum group $H$ is not commutative, to avoid applying a diagonal action. Further, the iterated join $\mathlarger{\mathlarger{\mathlarger{\circledast}}}_{i=1}^n C(\mathbb{Z}/2\mathbb{Z})$ is isomorphic to $C(\mathbb{S}^{n-1})$, and the induced diagonal action (at each additional join) is just the antipodal action of $\mathbb{Z}/2\mathbb{Z}$ on $C(\mathbb{S}^{n-1})$. The conjectures of Baum, D\polhk{a}browski, and Hajac in \cite{bau15} therefore directly generalize the Borsuk-Ulam theorem.

\begin{conj}(\cite[Conjecture 2.3]{bau15})\label{conj:NCBU-BDH}
Let $A$ be a unital $C^*$-algebra with a free action $\delta$ of a nontrivial compact quantum group $(H, \Delta)$. Also, let $A \circledast_\delta H$ denote the equivariant noncommutative join of $A$ and $H$, with the induced action of $(H, \Delta)$ given by $\delta_\Delta$. Then both of the following hold.
\begin{enumerate}
\item There does not exist a $(\delta, \delta_\Delta)$-equivariant, unital $*$-homomorphism from $A$ to $A \circledast_\delta H$.
\item There does not exist a $(\Delta, \delta_\Delta)$-equivariant, unital $*$-homomorphism from $H$ to $A \circledast_\delta H$.
\end{enumerate}
\end{conj}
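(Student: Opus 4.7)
The plan is to exploit the pullback presentation of the join. An equivariant unital $*$-homomorphism $\phi : A \to A \circledast_\delta H$ corresponds exactly to a norm-continuous path $(\phi_t)_{t \in [0,1]}$ of unital $*$-homomorphisms $A \to A \otimes_\mathrm{min} H$ with $\phi_0(A) \subseteq \C \otimes H$ and $\phi_1 = \delta$, where each $\phi_t$ intertwines $\delta$ on the domain with $\mathrm{id}_A \otimes \Delta$ on the codomain. The endpoint $\phi_0$ is then determined by a single unital character $A \to \C$, so $\phi$ genuinely encodes an equivariant homotopy through coactions of $H$ on $A$ connecting a trivial coaction to the free coaction $\delta$. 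My overall strategy is therefore to produce a homotopy-invariant classifier of equivariant coactions that distinguishes these two endpoints.

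For part 1, freeness of $\delta$ yields a Peter-Weyl decomposition $A = \bigoplus_\pi A_\pi$ by spectral subspaces indexed by the irreducible corepresentations of $H$, and equivariance forces each $\phi_t$ to respect this grading. The trivial coaction activates only the trivial spectral component, whereas freeness of $\delta$ (equivalently, existence of a strong connection) activates many. I would look for a Chern-Galois-type class living in $K_0$ of the fixed-point algebra $A^\delta$, or more generally in an equivariant $KK$-group, that can be read off from any equivariant coaction, is constant in $t$ along the family $\phi_t$, and yet takes different values at $t=0$ and $t=1$. For part 2, a $(\Delta, \delta_\Delta)$-equivariant map $\psi : H \to A \circledast_\delta H$ would make $H$ an equivariant retract of a suspension-type object, and at $t=1$ would land inside $\delta(A)$; composing with slice maps coming from positive states on $A$ should collapse this to an $H$-equivariant unital map $H \to A^\delta \rtimes (\text{something})$ that contradicts freeness of $\delta$.

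The main obstacle is turning these invariants into canonical, well-defined $K$-theoretic quantities for an \emph{arbitrary} compact quantum group $H$. In every instance of the conjecture that has been proven, whether $H = C(G)$ for finite $G$, reduced $C^*$-algebras of certain discrete groups, or the twisted spheres considered elsewhere in this paper, the obstruction is extracted by an index or degree computation specific to that $H$. A uniform proof would seem to require either a general equivariant Gysin-type exact sequence for the join $A \circledast_\delta H$, which is not available in this generality, or an adaptation of Kasparov's equivariant $KK$-theory that detects freeness in all principal $(H,\Delta)$-comodule algebras. Absent such a tool, I would expect to recover the conjecture only under additional structural hypotheses on $H$, for example coamenability of the dual, or existence of a faithful Haar state with a well-understood Chern character, under which the classifying invariant sketched above becomes rigorously constructible.
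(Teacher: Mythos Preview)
The statement you are attempting to prove is a \emph{conjecture}, not a theorem; the paper does not supply a proof of it. It is quoted from \cite{bau15} as background and motivation, and the paper's own contribution lies elsewhere (in the twisted-join setting of Definition~\ref{defn:twistedjoin} and the results of Sections~\ref{sec:nonexistence}--\ref{sec:existence}). So there is no ``paper's own proof'' to compare your proposal against.

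More seriously, part 2 of the conjecture is \emph{false}, and the paper says so explicitly in the paragraph immediately following the statement: counterexamples exist for compact groups acting on nuclear $C^*$-algebras, by \cite[Theorem 2.6]{selfjoinACCEPTED}. Your sketch for part 2 --- slicing a $(\Delta,\delta_\Delta)$-equivariant map $\psi:H\to A\circledast_\delta H$ with states on $A$ to contradict freeness --- therefore cannot work, and indeed the step ``composing with slice maps \ldots\ should collapse this to an $H$-equivariant unital map \ldots\ that contradicts freeness'' is where it breaks: slicing against a state on $A$ destroys multiplicativity, so you do not get a $*$-homomorphism out the other side, and there is no contradiction to extract.

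For part 1, your instinct that one needs additional structural hypotheses on $H$ is correct and matches the current state of affairs: the paper notes that type 1 is known when $H$ admits a torsion character other than the counit (via \cite[Corollary 2.7]{dab16revisitedARXIV} and \cite[Corollary 2.4]{selfsaturatedPUBLISHED}), but a uniform proof for arbitrary $(H,\Delta)$ is not available. Your proposed ``homotopy-invariant classifier'' living in $K_0(A^\delta)$ or equivariant $KK$ is a reasonable heuristic, but as you yourself acknowledge in the last paragraph, you have not constructed it, and the known positive results proceed instead by iteration and connectivity/dimension arguments (as in the proof of Theorem~\ref{thm:doubleequiv}) rather than by a single $K$-theoretic obstruction class.
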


We note that when $H = C(\mathbb{Z}/2\mathbb{Z})$, $A \circledast_\delta C(\mathbb{Z}/2\mathbb{Z})$ is equivariantly isomorphic to the\textit{ unreduced suspension} of $A$, $\Sigma A$, discussed in \cite{dab16}. More specifically, $\Sigma A$ is the (non-equivariant) noncommutative join $A \circledast C(\mathbb{Z}/2\mathbb{Z})$, and the action presented in \cite{dab16} is the diagonal action of $\mathbb{Z}/2\mathbb{Z}$.

Conjecture \ref{conj:NCBU-BDH} type 1 holds when $H$ is a compact quantum group with a torsion character other than a counit, as in \cite[Corollary 2.7]{dab16revisitedARXIV}, from a direct application of the case $H = C(\Z/k\Z)$ in \cite[Corollary 2.4]{selfsaturatedPUBLISHED}. Conjecture 2 is false, and counterexamples exist for compact groups acting on nuclear $C^*$-algebras from \cite[Theorem 2.6]{selfjoinACCEPTED}. However, there are certain key examples for which the type 2 conjecture holds, as in \cite{selfjoinACCEPTED, haj15ARXIV, bau15, dab16revisitedARXIV}. In \cite{selfsaturatedPUBLISHED}, we proposed a different type of join (and similarly, unreduced suspension) for $C^*$-algebras with free actions of $\mathbb{Z}/k\mathbb{Z}$, replacing the tensor product with a crossed product. We generalize this definition and adopt new terminology/notation to avoid confusion with Definition \ref{defn:joins}.

If $\Gamma$ is a discrete abelian group and $G$ is its compact abelian Pontryagin dual, then an action $\alpha$ of $G$ on $A$ is equivalent to a grading of $A$ by $\Gamma$, or a coaction $\delta: A \to A \otimes C^*(\Gamma)$ of the compact quantum group $C^*(\Gamma) = C(G)$. For $\gamma \in \Gamma$, which we may view as a character on $G$, the spectral subspace
\beu 
A_\gamma = \{a \in A: \alpha_g(a) = \gamma(g)a \text{ for all } g \in G\}
\eeu
is written in coaction form as the $\gamma$\textit{-isotypic subspace}
\beu
A_\gamma = \{a \in A: \delta(a) = a \otimes \gamma\},
\eeu
and the subspaces $A_\gamma$ produces a grading of $A$. When $\gamma = 1$, $A_\gamma$ is called the \textit{fixed-point subalgebra}. Moreover, freeness of the (co)action is equivalent to a saturation property
\be\label{eq:saturationfree}
\forall \gamma \in \Gamma, \hspace{.25 in} 1 \in \overline{A_\gamma A_\gamma^*}.
\ee
See \cite[Theorem 0.4]{bau13PUBLISHED} for the equivalence of freeness and saturation in greater generality, as well as \cite{phi09,kis80,goo94} for some useful discussion of saturation properties in the context of group actions, and \cite{mon93} for related conditions when $\Gamma$ is finite (but not necessarily abelian).

\begin{defn}\label{defn:twistedjoin}
Let $\Gamma$ be a discrete abelian group with $G = \widehat{\Gamma}$. Suppose $A$ is a unital $C^*$-algebra and $\beta$ is an action of $\Gamma$ on $A$. Then the $\beta$-\textit{twisted join} of $A$ and $C^*(\Gamma) \cong C(G)$ is
\be\label{eq:twistedjoindefeq}
J(A, \beta) = \{f \in C([0, 1], A \rtimes_\beta \Gamma): f(0) \in C^*(\Gamma), f(1) \in A\}.
\ee
When $\Gamma = \Z/2\Z$, we call $J(A, \beta)$ the $\beta$-\textit{twisted unreduced suspension of} $A$ \textit{and} $C^*(\Z/2\Z) \cong C(\Z/2\Z)$. If $\beta$ is the trivial action, then $J(A, \beta)$ is just $A \circledast C(G)$.
\end{defn}

There is a natural grading of $A \rtimes_\beta \Gamma$ by $\Gamma$, which extends to a grading of $J(A, \beta)$ pointwise, that places each $\gamma \in \Gamma$ in the $\gamma$-isotypic subspace and each $a \in A$ in the fixed-point subalgebra. This grading is determined from the action $\widehat{\beta}$ of the compact group $G = \widehat{\Gamma}$ on $A \rtimes_\beta \Gamma$, given by
\beu
\widehat{\beta}_g(a) = a, a \in A \hspace{1 in} \widehat{\beta}_g(\gamma) = g(\gamma), \gamma \in \Gamma.
\eeu
If $\alpha$ is an action of $G$ on $A$ which commutes with $\beta$, then we extend $\alpha$ to $A \rtimes_\beta \Gamma$ so that $\alpha$ fixes all elements of the group $\Gamma$. The composition $\alpha \widehat{\beta} = \widehat{\beta} \alpha$ produces an action of $G$ on $A \rtimes_\beta \Gamma$, and hence a grading of $A \rtimes_\beta \Gamma$ by $C^*(\Gamma)$, generated by the following rule: if $a$ is in the $\tau$-isotypic subspace of $A$ from $\alpha$, and $\gamma \in \Gamma$, then $a\gamma = \gamma \beta(a)$ is in the $\tau \gamma$-isotypic subspace of $A \rtimes_\beta \Gamma$. In particular, if $\Gamma = \Z/2\Z$ is generated by $\mu$, then $\alpha \widehat{\beta}$ is generated by the automorphism
\beu
a_0 + a_1 \mu \hspace{.1 in} \mapsto \hspace{.1 in} \alpha(a_0) - \alpha(a_1) \mu.
\eeu
Similarly, if $\Gamma = \Z/k\Z$ is generated by $\mu$ and $\omega = e^{2 \pi i / k}$, then $\alpha \widehat{\beta}$ is generated by the automorphism
\beu
a_0 + a_1 \mu + \ldots + a_{k-1} \mu^{k-1} \hspace{.1 in} \mapsto \hspace{.1 in} \alpha(a_0) + \omega \alpha(a_1) \mu + \ldots + \omega^{k-1} \alpha(a_{k-1}) \mu^{k-1}.
\eeu

\begin{defn}
If $\alpha$ is an action of a compact abelian group $G$ on $A$ that commutes with an action $\beta$ of $\Gamma = \widehat{G}$ on $A$, then let $\widetilde{\alpha}$ denote the pointwise application of the action $\alpha \widehat{\beta}$ on $J(A, \beta)$.
\end{defn}

In the non-twisted setting, when $\beta$ is trivial, $\widetilde{\alpha}$ corresponds to the diagonal action of $G$ on $A \circledast C(G)$. Hence, if $G = \Z/k\Z$, $\alpha$ is free, and $\beta$ is trivial, then there are no $(\alpha, \widetilde{\alpha})$-equivariant unital $*$-homomorphisms from $A$ to $J(A, \beta)$, as seen in \cite[Corollary 2.4]{selfsaturatedPUBLISHED}. To extend nonexistence results about equivariant maps $A \to J(A, \beta)$ to cases where $\beta$ is nontrivial, we know from \cite[Example 3.7]{selfsaturatedPUBLISHED} that some assumption on $\beta$, or the equivariant map in question, is still necessary. Here we consider two assumptions that insist $\beta$ is not too far removed from the trivial action, in an attempt to generalize known examples and theorems.

\begin{ques}\label{ques:thequestion}
Suppose $\alpha$ and $\beta$ are commuting actions of $\Z/k\Z$ on a unital $C^*$-algebra $A$, and $\alpha$ is free. Consider the following conditions on $\beta$.
\begin{enumerate}
\item The action $\beta$ is not free.
\item The individual automorphisms of $\beta$ are connected within $\mathrm{Aut}(A)$ to the trivial automorphism.
\end{enumerate}
Is either condition sufficient to guarantee that there are no $(\alpha, \widetilde{\alpha})$-equivariant, unital $*$-homomorphisms from $A$ to $J(A, \beta)$?
\end{ques}

The conditions were determined through the computation of various examples, as in \cite{selfsaturatedPUBLISHED, self17antisphere}, chief among them odd-dimensional $\theta$-deformed spheres (defined in \cite{mat91, nat97}) and twisted versions thereof. In section \ref{sec:nonexistence} we consider more restrictive assumptions that guarantee nonexistence of equivariant maps from $A$ to $J(A, \beta)$. However, neither condition of Question \ref{ques:thequestion} is actually sufficient in general, as shown in Theorems \ref{thm:condition1fails} and \ref{thm:condition2fails}.  Finally, in the remainder of section \ref{sec:existence} we apply the usual embedding $A \rtimes_\beta \Z/k\Z \hookrightarrow M_k(A)$ to see consequences of the aforementioned results for deforming the diagonal inclusion $A \hookrightarrow M_k(A)$ to finite-dimensional and one-dimensional representations. Extending the definition of unital contractibility to unital contractibility \lq\lq modulo $k$\rq\rq\hspace{0pt}, we find that the connection between contractibility properties and examples of equivariant maps $A \to J(A, \beta)$ is not as direct as in \cite[Corollary 2.7]{dab16revisitedARXIV}.


\section{Nonexistence}\label{sec:nonexistence}

The noncommutative Borsuk-Ulam theorem in \cite[Corollary 2.4]{selfsaturatedPUBLISHED} can be proved using an iteration procedure, which is not as immediate in the twisted setting. Specifically, while morphisms $A \to B$ induce morphisms $A \circledast C(G) \to B \circledast C(G)$, the same does not generally hold for twisted joins without additional equivariance requirements. In this section, we consider sufficient conditions that guarantee there are no equivariant morphisms $A \to J(A, \beta)$. To avoid unnecessary repretition, $\Gamma$ will always refer to a discrete abelian group, and $G$ will be its compact abelian Pontryagin dual group $\widehat{\Gamma}$.

If $\beta$ is an action of $\Gamma$ on $A$, then we may extend $\beta$ to $A \rtimes_\beta \Gamma$ so that each group element is in the fixed-point subalgebra. We will also refer to this action, as well as its pointwise application on $J(A, \beta)$, by $\beta$.

\begin{lem}\label{lem:equivariantiteration}
Suppose $\beta_A$ and $\beta_B$ are actions of $\Gamma$ on unital $C^*$-algebras $A$ and $B$. If $\phi: A \to B$ is a $(\beta_A, \beta_B)$-equivariant, unital $*$-homomorphism, then the rule
\be \label{eq:pointwisephi}
a \in A \mapsto \phi(a) \hspace{1 in} \gamma \in \Gamma \mapsto \gamma
\ee
produces a homomorphism $A \rtimes_{\beta_A} \Gamma \to B \rtimes_{\beta_B} \Gamma$. If $J_\phi: J(A, \beta_A) \to J(B, \beta_B)$ denotes the pointwise application of this map, then $J_\phi$ is $(\beta_A, \beta_B)$-equivariant.

If, in addition, $\alpha_A$ and $\alpha_B$ are actions of $G = \widehat{\Gamma}$ on $A$ and $B$ which commute with $\beta_A$ and $\beta_B$, respectively, and $\phi$ is also $(\alpha_A, \alpha_B)$-equivariant, then $J_\phi$ is pointwise $(\widetilde{\alpha_A}, \widetilde{\alpha_B})$-equivariant.
\end{lem}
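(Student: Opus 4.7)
The plan is to build the map $A \rtimes_{\beta_A} \Gamma \to B \rtimes_{\beta_B} \Gamma$ from the universal property of the crossed product, then transfer it pointwise to $J$, and finally chase definitions to verify both equivariance statements. Since $\Gamma$ is abelian (hence amenable), there is no distinction between full and reduced crossed products, so covariant representations of $(A, \Gamma, \beta_A)$ correspond precisely to $*$-homomorphisms out of $A \rtimes_{\beta_A} \Gamma$.

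First I would construct $\tilde\phi$. Let $\pi: B \hookrightarrow B \rtimes_{\beta_B} \Gamma$ and $u: \Gamma \hookrightarrow B \rtimes_{\beta_B} \Gamma$ be the canonical inclusions, so that $u_\gamma \pi(b) u_\gamma^* = \pi(\beta_B(\gamma)(b))$. The pair $(\pi \circ \phi, u)$ is then a covariant representation of $(A, \Gamma, \beta_A)$ in $B \rtimes_{\beta_B} \Gamma$, because the $(\beta_A, \beta_B)$-equivariance of $\phi$ gives
\[
u_\gamma \, \pi(\phi(a)) \, u_\gamma^* \;=\; \pi(\beta_B(\gamma)(\phi(a))) \;=\; \pi(\phi(\beta_A(\gamma)(a))).
\]
Universality then yields a unital $*$-homomorphism $\tilde\phi: A \rtimes_{\beta_A} \Gamma \to B \rtimes_{\beta_B} \Gamma$ implementing the rule \eqref{eq:pointwisephi}.

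Next, I would define $J_\phi$ by pointwise application of $\tilde\phi$ to $C([0,1], A \rtimes_{\beta_A} \Gamma)$-valued functions. The boundary conditions in $J(A,\beta_A)$ transport correctly: if $f(0) \in C^*(\Gamma)$, then $\tilde\phi(f(0)) \in C^*(\Gamma)$ because $\tilde\phi$ is the identity on the canonical copy of $\Gamma$; and if $f(1) \in A$, then $\tilde\phi(f(1)) = \phi(f(1)) \in B$. So $J_\phi$ lands in $J(B, \beta_B)$. For $(\beta_A, \beta_B)$-equivariance, recall that $\beta_A$ extends to $A \rtimes_{\beta_A} \Gamma$ fixing $\Gamma$ pointwise. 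On the generator $a \in A$, $\tilde\phi(\beta_A(\gamma)(a)) = \phi(\beta_A(\gamma)(a)) = \beta_B(\gamma)(\phi(a)) = \beta_B(\gamma)(\tilde\phi(a))$ by hypothesis, while on $\Gamma$ both actions are trivial. Thus $\tilde\phi$ intertwines the extended $\beta$-actions, and hence $J_\phi$ does pointwise.

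Finally, for the pointwise $(\widetilde{\alpha_A}, \widetilde{\alpha_B})$-equivariance, fix $g \in G$ and examine $\alpha \widehat{\beta}$ on the generators of the crossed product. On $A$, $\alpha_A \widehat{\beta_A}$ acts as $\alpha_A$, and $\tilde\phi(\alpha_A(a)) = \phi(\alpha_A(a)) = \alpha_B(\phi(a)) = \alpha_B \widehat{\beta_B}(\tilde\phi(a))$ using $(\alpha_A, \alpha_B)$-equivariance of $\phi$ together with the fact that $\widehat{\beta_B}$ fixes $B$. On $\Gamma$, both $\alpha_A \widehat{\beta_A}$ and $\alpha_B \widehat{\beta_B}$ act by the scalar $g(\gamma)$, and $\tilde\phi(\gamma) = \gamma$, so the diagram commutes. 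This compatibility on the generators extends to all of $A \rtimes_{\beta_A} \Gamma$ by multiplicativity and continuity, and passes to pointwise functions, giving the required equivariance of $J_\phi$. The only genuinely delicate point in the argument is the verification of covariance in the first step; everything else is a routine check on generators, so I expect no serious obstacle.
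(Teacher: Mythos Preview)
Your proposal is correct and follows essentially the same route as the paper: construct $\tilde\phi$ (the paper's $\psi$) from the universal property of the crossed product, push it pointwise to $J$, check the boundary conditions, and verify both equivariances on generators. The only cosmetic difference is that the paper verifies $(\alpha_A\widehat{\beta_A},\alpha_B\widehat{\beta_B})$-equivariance by a direct computation on finite sums $\sum a_\gamma\cdot\gamma$, whereas you check it on the two types of generators separately and appeal to multiplicativity and continuity; these are equivalent.
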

\begin{proof}
The function $\psi: A \rtimes_{\beta_A} \Gamma \to B \rtimes_{\beta_B} \Gamma$ defined by (\ref{eq:pointwisephi})
is a unital $*$-homomorphism by the universal property of crossed products. The associated homomorphism
\beu
\mathrm{id} \otimes \psi: C([0,1]) \otimes \left(A \rtimes_{\beta_A} \Gamma \right) \mapsto C([0,1]) \otimes \left(B \rtimes_{\beta_B} \Gamma\right)
\eeu
respects the boundary conditions of the twisted join, as $\psi(C^*(\Gamma)) = C^*(\Gamma)$ and $\psi(A) \subset B$. Therefore, it induces a homomorphism $J_\phi: J(A, \beta_A) \to J(B, \beta_B)$, which is pointwise $(\beta_A, \beta_B)$-equivariant by design. Finally, the actions $\widetilde{\alpha_A}$ and $\widetilde{\alpha_B}$ are pointwise applications of $\alpha_A \widehat{\beta_A}$ and $\alpha_B \widehat{\beta_B}$, so it suffices to prove that $\psi$ is $(\alpha_A \widehat{\beta_A}, \alpha_B \widehat{\beta_B})$-equivariant. For $g \in G$,

\beu \ba
(\alpha_B \widehat{\beta_B})_g\left(\psi\left( \sum a_\gamma \cdot \gamma \right)\right)	&= (\alpha_B \widehat{\beta_B})_g\left( \sum \phi(a_\gamma) \cdot \gamma \right)  \hspace{1 in} \\
	&= \sum (\alpha_B)_g( \phi(a_\gamma)) \cdot g(\gamma) \\
	&= \sum \phi((\alpha_A)_g(a_\gamma)) \cdot g(\gamma) \\ 
	&= \psi\left( \sum (\alpha_A)_g(a_\gamma) \cdot g(\gamma)\right) \\ 
	&= \psi\left( (\alpha_A \widehat{\beta_A})_g \left( \sum a_\gamma \cdot \gamma \right) \right),
\ea \eeu
so $\psi$ is $(\alpha_A \widehat{\beta_A}, \alpha_B \widehat{\beta_B})$-equivariant, and $J_\phi$ is $(\widetilde{\alpha_A}, \widetilde{\alpha_B})$-equivariant.
\end{proof}

The additional equivariance demanded in Lemma \ref{lem:equivariantiteration} is automatic if $\beta$ is trivial. It follows that the result below generalizes \cite[Corollary 2.4]{selfsaturatedPUBLISHED}.

\begin{thm}\label{thm:doubleequiv}
Let $\Gamma = G = \Z/k\Z$, $k \geq 2$, and suppose $A$ is a unital $C^*$-algebra with two commuting actions $\alpha$ and $\beta$ of $\Z/k\Z$, where $\alpha$ is free. Then there is no unital $*$-homomorphism $\phi: A \to J(A, \beta)$ which is both $(\alpha, \widetilde{\alpha})$-equivariant and $(\beta, \beta)$-equivariant.
\end{thm}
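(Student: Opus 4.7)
The plan is to combine Lemma~\ref{lem:equivariantiteration} with a direct boundary analysis at $t=0$, thereby mimicking (and generalizing) the untwisted proof of \cite[Corollary 2.4]{selfsaturatedPUBLISHED}. The role of the $(\beta,\beta)$-equivariance assumption is precisely to activate Lemma~\ref{lem:equivariantiteration}: in the untwisted case $\beta=\mathrm{id}$ this equivariance is automatic, so the theorem collapses to the known untwisted result, and the same argument should carry through in the twisted setting provided the iteration can be performed.

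First, I verify that $\widetilde{\alpha}$ and $\beta$ on $J(A,\beta)$ still commute (which reduces to checking that $\alpha\widehat{\beta}$ and the extended $\beta$ commute on $A\rtimes_\beta\Gamma$, and this follows from the commutation of $\alpha$ and $\beta$ on $A$ together with the extension conventions fixing group elements). Lemma~\ref{lem:equivariantiteration} then produces $J_\phi \colon J(A,\beta)\to J(J(A,\beta),\beta)$ carrying both equivariances, and iterating yields a tower $\phi_n \colon A \to J^{(n)}(A,\beta)$ of unital $*$-homomorphisms for every $n\geq 1$. Next, I compose $\phi$ with the boundary map $\mathrm{ev}_0 \colon J(A,\beta)\to C^*(\Gamma)$ to obtain $\psi:=\mathrm{ev}_0\circ\phi$. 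The $(\alpha,\widetilde{\alpha})$-equivariance restricts to $(\alpha,\widehat{\beta}|_{C^*(\Gamma)})$-equivariance of $\psi$, which forces $\psi(A_\gamma^\alpha)\subseteq \C\gamma$ since the $\gamma$-isotypic subspace of $C^*(\Gamma)$ under $\widehat{\beta}$ is exactly one-dimensional. Simultaneously, the $(\beta,\beta)$-equivariance combined with the triviality of $\beta$ on $C^*(\Gamma)$ (via the extension convention) forces $\psi\circ\beta_\tau=\psi$ for every $\tau\in\Gamma$, so $\psi$ annihilates $A_\eta^\beta$ whenever $\eta\neq 1$.

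The main obstacle is then to extract a contradiction from these boundary constraints together with the saturation characterization of freeness of $\alpha$. For any nontrivial $\gamma$, saturation gives $a_i\in A_\gamma^\alpha$ with $\sum_i a_ia_i^*\to 1$; writing $\overline{a_i}:=\frac{1}{k}\sum_\tau \beta_\tau(a_i) \in A^\beta\cap A_\gamma^\alpha$, the $\beta$-invariance of $\psi$ gives $\psi(\overline{a_i})=\psi(a_i)=c_i\gamma$, and a direct $*$-homomorphism calculation yields $\sum_i|c_i|^2\to 1$. A single application of $\mathrm{ev}_0$ is not yet contradictory, so the proof must exploit the iterated maps $\phi_n$. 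Applying the same boundary analysis to $\phi_n$ at each of the $n$ initial endpoints of $J^{(n)}(A,\beta)$ produces unital, $(\alpha, \text{graded})$-equivariant $*$-homomorphisms $A\to C^*(\Gamma)^{\otimes n}$ that factor through $A^\beta$, and the accumulated character-theoretic (or $K$-theoretic) constraints from iteration eventually become incompatible with the saturation of $\alpha$, exactly as in the untwisted argument. Tracking the twisted grading cleanly through the iteration is the technical heart of the proof, and it is precisely at this point that the $(\beta,\beta)$-equivariance preserved by Lemma~\ref{lem:equivariantiteration} is indispensable.
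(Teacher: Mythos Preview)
Your overall strategy matches the paper's: iterate via Lemma~\ref{lem:equivariantiteration} using both equivariances, pass to $C^*(\Gamma)$ through the $t=0$ boundary, and derive a contradiction between saturation of $\alpha$ and the growing connectivity of iterated joins of $\Z/k\Z$. The gap is in your step that produces ``unital, $(\alpha,\text{graded})$-equivariant $*$-homomorphisms $A\to C^*(\Gamma)^{\otimes n}$'' by ``applying the same boundary analysis to $\phi_n$ at each of the $n$ initial endpoints of $J^{(n)}(A,\beta)$.'' This is not well-defined: $J^{(n)}(A,\beta)$ has a single $t=0$ boundary landing in $C^*(\Gamma)$, and successive $t=1$ evaluations only peel off one join layer at a time, each time again landing (at $t=0$) in the \emph{same} copy of $C^*(\Gamma)$. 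There is no natural evaluation into $C^*(\Gamma)^{\otimes n}$, and in any case $C^*(\Gamma)^{\otimes n}\cong C(G^n)$ is not the object whose local-index/connectivity grows; the relevant target is the iterated \emph{join} $C((\Z/k\Z)^{*n+1})$. Your $\beta$-averaging manipulation with the $\overline{a_i}$ is correct but, as you note, does not itself yield a contradiction, and the vague appeal to ``accumulated character-theoretic (or $K$-theoretic) constraints'' does not close this gap.

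The paper fixes this by running a \emph{second} iteration in parallel. From $\psi_0:=\mathrm{ev}_0\circ\phi:A\to C^*(\Gamma)$, which is $(\alpha,\rho_0)$- and $(\beta,\mathrm{triv})$-equivariant, one applies Lemma~\ref{lem:equivariantiteration} to $\psi_0$ itself (not to $\phi$). Because $\beta$ acts trivially on $C^*(\Gamma)$, the induced tower on the right is the \emph{untwisted} one: $B_n=J(B_{n-1},\mathrm{triv})\cong C((\Z/k\Z)^{*n+1})$ with the diagonal action $\rho_n$. This yields $(\alpha_n,\rho_n)$-equivariant maps $\psi_n:A_n\to B_n$, and composing with your $\Phi_n:A\to A_n$ gives genuine $(\alpha,\text{diag})$-equivariant unital $*$-homomorphisms $A\to C((\Z/k\Z)^{*n+1})$. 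The contradiction is then the concrete one from \cite{dol83} and \cite[Proposition~4.4.3]{mat03}: freeness of $\alpha$ gives a \emph{fixed} finite $N$ with $a_1,\ldots,a_N,b_1,\ldots,b_N\in A_\gamma$ and $\sum a_ib_i^*$ invertible, whereas the minimal such $N$ in the $\gamma$-isotypic subspace of $C((\Z/k\Z)^{*n+1})$ tends to infinity with $n$. Replacing your boundary-extraction paragraph with this parallel untwisted tower makes the argument complete.
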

\begin{proof}
We let $A_0 := A$, which admits actions $\alpha_0 := \alpha$ and $\beta$. The twisted join $A_1 := J(A, \beta)$ admits the pointwise action of $\beta$ (still denoted $\beta$), and we let $\alpha_1 = \widetilde{\alpha_0}$. Iterating this procedure, we define $A_n$ as an iterated twisted join of $A$ via the rule
\beu
A_n = J(A_{n-1}, \beta), \hspace{.5 in} \alpha_n = \widetilde{\alpha_{n-1}}.
\eeu
Suppose $\phi_0 := \phi: A_0 \to A_1$ is both $(\alpha_0, \alpha_1)$-equivariant and $(\beta, \beta)$ equivariant. Then repeated applications of Lemma \ref{lem:equivariantiteration} produce $(\alpha_{n-1}, \alpha_n)$-equivariant and $(\beta, \beta)$-equivariant maps $\phi_n: A_{n-1} \to A_n$. From composition of these maps in a chain, we find that for any $n \in \Z^+$, there is an $(\alpha_0, \alpha_n)$-equivariant, $(\beta, \beta)$-equivariant map $\Phi_n: A_0 \to A_n$.

Next, we apply a similar iteration procedure for maps into $C^*(\Z/k\Z)$. Let $B_0 := C^*(\Z/k\Z)$ be graded by $\Z/k\Z$ in the usual way, inducing an action $\rho_0$. Since $\phi_0: A_0 \to J(A_0, \beta)$ is $(\alpha, \widetilde{\alpha})$-equivariant and $(\beta, \beta)$-equivariant, evaluation at the $t = 0$ endpoint of (\ref{eq:twistedjoindefeq}) shows that there is a map $\psi_0: A_0 \to B_0$ which is $(\alpha_0, \rho_0)$-equivariant and $(\beta, \textrm{triv})$-equivariant. Define
\beu
B_n = J(B_{n-1}, \textrm{triv}), \hspace{.5 in} \rho_n = \widetilde{\rho_{n-1}},
\eeu
and note that iteration of Lemma \ref{lem:equivariantiteration} once more establishes that for each $n$, there is a $(\alpha_n, \rho_n)$-equivariant, $(\beta, \textrm{triv})$-equivariant map $\psi_n: A_n \to B_n$. Since $B_n$ is defined using a trivial twist in its iterated join procedure, we note that $B_n \cong \mathlarger{\mathlarger{\mathlarger{\circledast}}}_{i=1}^{n+1} C(\Z/k\Z) = C((\Z/k\Z)^{*n+1})$, and $\rho_n$ is the diagonal action.

Finally, the composition $\psi_n \circ \Phi_n: A \to C(\Z/k\Z^{*n})$ is $(\alpha, \text{diag})$-equivariant. Since the action $\alpha$ is free, fix a generator $\gamma \in \Z/k\Z$ and note that by (\ref{eq:saturationfree}), $1 \in \overline{A_\gamma A_\gamma^*}$. In particular, there is a finite $N$ such that there exist $a_1, b_1, \ldots, a_N, b_N \in A_\gamma$ with $\sum a_i b_i^*$ invertible. On the other hand, from the increasing connectivity of the iterated joins of $\Z/k\Z$, the number of elements in the $\gamma$-isotypic subspace of $C(\Z/k\Z^{*n})$ required to generate an invertible grows without bound as $n$ increases (see \cite{dol83} and \cite[Definition 4.3.1 and Proposition 4.4.3]{mat03}). It follows that for large $n$, the existence of the $(\alpha, \text{diag})$-equivariant map $\psi_n \circ \Phi_n: A \to C((\Z/k\Z)^{*n})$ leads to a contradiction.
\end{proof} 

Theorem \ref{thm:doubleequiv} does not assume any condition on the freeness or non-freeness of $\beta$. However, we note that the pointwise action $\beta$ on $J(A, \beta)$ is not free, as at the endpoint $t = 0$ of (\ref{eq:twistedjoindefeq}), $\beta$ corresponds to the trivial action on $C^*(\Z/k\Z)$. Therefore, if the original action $\beta$ on $A$ is free, there cannot be a $(\beta, \beta)$-equivariant unital $*$-homomorphism from $A$ to $J(A, \beta)$, regardless of any other action $\alpha$. It follows that Theorem \ref{thm:doubleequiv} is useful when $\beta$ is \textit{not} free, so it may be safely viewed inside the framework of Question \ref{ques:thequestion}, condition 1.

If $A = C(X)$ and $k \in \Z^+$ is prime, then any non-free action of $\Z/k\Z$ has a nonempty fixed point set. When this set is an equivariant retract of $X$, where $X$ is acted upon freely, we may produce a twisted Borsuk-Ulam theorem as follows.

\begin{prop}
Let $k \in \Z^+$ be prime, and let $X$ be a compact Hausdorff space with two commuting $\Z/k\Z$ actions $\alpha$ and $\beta$, where $\alpha$ is free and $\beta$ is not. Let $Y = \mathrm{Fix}(\beta) \not= \varnothing$ be equipped with the restricted action $\gamma = \alpha|_Y$, and suppose there is an $(\alpha, \gamma)$-equivariant continuous function $f: X \to Y$. Then there is no unital, $(\alpha, \widetilde{\alpha})$-equivariant $*$-homomorphism from $C(X)$ to $J(C(X), \beta)$.
\end{prop}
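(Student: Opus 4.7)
The plan is to reduce the twisted statement on $X$ to the untwisted noncommutative Borsuk-Ulam theorem \cite[Corollary 2.4]{selfsaturatedPUBLISHED} applied to the free $\Z/k\Z$-action $\gamma$ on the fixed-point set $Y$. Suppose toward contradiction that $\phi\colon C(X) \to J(C(X), \beta)$ is an $(\alpha, \widetilde{\alpha})$-equivariant unital $*$-homomorphism. I will manufacture from $\phi$ a $(\gamma, \widetilde{\gamma})$-equivariant unital $*$-homomorphism
\beu
\Phi\colon C(Y) \to J(C(Y), \mathrm{triv}) = C(Y) \circledast C(\Z/k\Z),
\eeu
which is forbidden by \cite[Corollary 2.4]{selfsaturatedPUBLISHED} (the trivial-twist case of Theorem \ref{thm:doubleequiv}) because $\gamma$ is a free action of $\Z/k\Z$ on the nonempty compact Hausdorff space $Y$ (freeness descends since $Y$ is $\alpha$-invariant).

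The composition is $\Phi = \Psi \circ \phi \circ f^*$. The pullback $f^*\colon C(Y) \to C(X)$ is $(\gamma, \alpha)$-equivariant by contravariant functoriality, so it only remains to build an appropriate $*$-homomorphism $\Psi\colon J(C(X), \beta) \to J(C(Y), \mathrm{triv})$. For this I use the restriction $\iota^*\colon C(X) \to C(Y)$ along the inclusion $\iota\colon Y \hookrightarrow X$. The hypothesis $Y = \mathrm{Fix}(\beta)$ is the crux: for $a \in C(X)$, $g \in \Z/k\Z$, and $y \in Y$,
\beu
\iota^*(\beta_g(a))(y) \;=\; a(\beta_g^{-1}(y)) \;=\; a(y) \;=\; \iota^*(a)(y),
\eeu
so $\iota^*$ is $(\beta, \mathrm{triv})$-equivariant. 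By the universal property of the crossed product, $\iota^*$ then extends to a unital $*$-homomorphism
\beu
\iota^* \rtimes \mathrm{id}\colon C(X) \rtimes_\beta \Z/k\Z \;\longrightarrow\; C(Y) \rtimes_\mathrm{triv} \Z/k\Z \;=\; C(Y) \otimes C^*(\Z/k\Z)
\eeu
sending $a \cdot g$ to $\iota^*(a) \cdot g$. Applied pointwise along $[0,1]$, this extension preserves the two twisted-join endpoint conditions ($C^*(\Z/k\Z)$ at $t=0$ and the function algebra at $t=1$) and descends to the required $\Psi$.

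It remains to check that $\Psi$ intertwines $\widetilde{\alpha}$ and $\widetilde{\gamma}$. At the crossed product level, on generators
\beu
(\alpha \widehat{\beta})_h(a \cdot g) \;=\; \alpha_h(a) \cdot h(g)\,g \;\;\stackrel{\iota^* \rtimes \mathrm{id}}{\longmapsto}\;\; \gamma_h(\iota^*(a)) \cdot h(g)\,g \;=\; (\gamma\,\widehat{\mathrm{triv}})_h(\iota^*(a) \cdot g),
\eeu
so $\Psi$ is pointwise $(\widetilde{\alpha}, \widetilde{\gamma})$-equivariant; note that on the trivially twisted target $\widetilde{\gamma}$ is precisely the diagonal action on $C(Y) \circledast C(\Z/k\Z)$. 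Chaining the three equivariant pieces produces the desired $\Phi$, contradicting \cite[Corollary 2.4]{selfsaturatedPUBLISHED}.

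The only genuinely nontrivial step is recognizing that restriction to $\mathrm{Fix}(\beta)$ collapses the twist, letting us land in an untwisted join where a prior Borsuk-Ulam result applies; the essential work is done by the two-line check that $\iota^*$ is $(\beta, \mathrm{triv})$-equivariant. Beyond that, the proof is routine functoriality together with bookkeeping of the $G$-action on the crossed product via the Pontryagin pairing, and I do not anticipate any further technical obstacle.
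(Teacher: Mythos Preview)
Your proof is correct and follows essentially the same route as the paper: compose $f^*\colon C(Y)\to C(X)$, the assumed $\phi$, and the pointwise restriction $J(C(X),\beta)\to J(C(Y),\mathrm{triv})$ to obtain a $(\gamma,\mathrm{diag})$-equivariant map $C(Y)\to C(Y)\circledast C(\Z/k\Z)$, contradicting the untwisted Borsuk--Ulam result. The only cosmetic difference is that the paper invokes Lemma~\ref{lem:equivariantiteration} for the construction and equivariance of your $\Psi$, whereas you verify these directly on generators.
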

\begin{proof}
Suppose $\phi: C(X) \to J(C(X), \beta)$ is $(\alpha, \widetilde{\alpha})$-equivariant. First, we have that the dual map $f^*: C(Y) \to C(X)$ to $f: X \to Y$ is $(\gamma, \alpha)$-equivariant. Second, the restriction map $C(X) \to C(Y)$ is certainly $(\alpha, \gamma)$-equivariant and $(\beta, \mathrm{triv})$-equivariant, so it may be applied pointwise on the twisted joins by Lemma \ref{lem:equivariantiteration}. This gives
\beu
C(Y) \xrightarrow[(\gamma, \alpha)]{f^*} C(X) \xrightarrow[(\alpha, \widetilde{\alpha})]{\phi} J(C(X), \beta) \xrightarrow[(\widetilde{\alpha}, \widetilde{\gamma})]{\textrm{pointwise} \hspace{2 pt} |_Y} J(C(Y), \mathrm{triv}) \xrightarrow[(\widetilde{\gamma}, \textrm{diag})]{\cong} C(Y) \circledast C(\Z/k\Z).
\eeu
The composition is $(\gamma, \textrm{diag})$-equivariant, contradicting \cite[Corollary 2.4]{selfsaturatedPUBLISHED} (or, rather, the topological result \cite[Corollary 3.1]{vol05}).
\end{proof}

The following proposition is motivated by a topological picture: if $A = C(X)$ is commutative and $\alpha$ is a free action of $\Z/k\Z$ which permutes the components of $X$, then for non-free actions $\beta$ on $X$, there might not exist equivariant maps $A \to J(A, \beta)$ due to the existence of finite-order unitaries. The arguments depend upon a standard matrix expansion map
\be\label{eq:expansionmapeq}
E: A \rtimes_\beta \Z/k\Z \to M_k(A),
\ee
defined by mapping $a \in A$ to the diagonal matrix with entries $a, \beta_1(a), \ldots, \beta_{k-1}(a)$ and mapping a generator $\mu \in \Z/k\Z$ to a $\{0,1\}$-valued matrix $S$ which induces the shift $e_n \mapsto e_{n+1}$ in the standard basis of $\C^k$. The action $\beta$ on $A \rtimes_\beta \Z/k\Z$ then corresponds to the entrywise application of $\beta$ on the subalgebra $E(A \rtimes_\beta \Z/k\Z) \subseteq M_k(A)$.

\begin{prop}\label{prop:connectedness}
Fix $k \geq 2$ and a generator $\mu$ of $\widehat{\Z/k\Z} \cong \Z/k\Z$. Suppose $A$ is a unital $C^*$-algebra with a free action $\alpha$ of $\Z/k\Z$, such that there is a unitary $x$ in the $\mu$-isotypic subspace with $x^k = 1$. Further, let $\beta$ be an action of $\Z/k\Z$ on $A$ which commutes with $\alpha$, such that the ideal $I$ generated by terms $\beta_m(a) - a$, $m \in \Z/k\Z$, $a \in A$ is proper, and the $K_0(A/I)$-class of the unit $1$ is not divisible by $k$. Then there is no $(\alpha, \widetilde{\alpha})$-equivariant unital $*$-homomorphism from $A$ to $J(A, \beta)$.
\end{prop}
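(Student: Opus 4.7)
My plan is to apply the pointwise matrix expansion map (\ref{eq:expansionmapeq}) to $\phi(x)$ and then compare $K_0$-classes of spectral projections at the two endpoints of the resulting path in $M_k(A)$. Assume for contradiction that $\phi: A \to J(A, \beta)$ is a unital $(\alpha, \widetilde{\alpha})$-equivariant $*$-homomorphism. Because $\phi$ is a unital $*$-homomorphism and $x$ is a unitary with $x^k = 1$, the image $\phi(x) \in J(A, \beta)$ is a unitary with $\phi(x)^k = 1$. Equivariance places $\phi(x)$ in the $\mu$-isotypic subspace of $\widetilde{\alpha}$ pointwise; applying this at the endpoints of (\ref{eq:twistedjoindefeq}) forces $\phi(x)(0) = c\mu \in C^*(\Z/k\Z)$ for some scalar $c$ with $c^k = 1$, and $\phi(x)(1) = y \in A_\mu$ for some unitary $y \in A$ with $y^k = 1$.

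Next I compose pointwise with $E$ to obtain a continuous path of unitaries $u(t) := E(\phi(x)(t)) \in M_k(A)$ with $u(t)^k = 1_{M_k(A)}$ throughout. The endpoint values are explicit: $u(0) = cS$, where $S$ is the shift matrix from (\ref{eq:expansionmapeq}), and $u(1) = \mathrm{diag}(y, \beta_1(y), \ldots, \beta_{k-1}(y))$. Since $u(t)^k = 1$ identically, the spectrum of every $u(t)$ is contained in the well-separated set of $k$-th roots of unity, so holomorphic functional calculus produces spectral projections $q_j(t) \in M_k(A)$ onto each eigenvalue $\omega^j$ that depend continuously on $t$. Consequently each class $[q_j(t)]_0 \in K_0(M_k(A))$ is constant, and I may push to $K_0(M_k(A/I)) \cong K_0(A/I)$ under the quotient map, noting that $I$ is $\alpha$-invariant because $\alpha$ and $\beta$ commute.

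The final step compares the two endpoints inside $K_0(A/I)$. At $t = 0$, $u(0) = cS$ lies in $M_k(\C)$ and each of its eigenspaces is a one-dimensional line in $\C^k$, so each $q_j(0)$ is a rank-one scalar projection, and under the Morita identification $K_0(M_k(A/I)) \cong K_0(A/I)$ its class is $[1_{A/I}]_0$. At $t = 1$, on the other hand, $\beta$ becomes trivial modulo $I$, so $E$ reduces to the diagonal embedding $a \mapsto a \otimes 1_k$; the image of $u(1)$ in $M_k(A/I)$ is $\bar{y} \otimes 1_k$, whose spectral projection onto $\omega^j$ is $\bar{p}_j \otimes 1_k$, where $\bar{p}_j \in A/I$ is the spectral projection of $\bar{y}$ onto $\omega^j$. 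Hence $[q_j(1)]_0 = k[\bar{p}_j]_0$ in $K_0(A/I)$, and constancy in $t$ forces $[1_{A/I}]_0 = k[\bar{p}_j]_0$, contradicting the hypothesis that $[1_{A/I}]_0$ is not divisible by $k$. The main subtlety is the Morita bookkeeping at the two endpoints: one must simultaneously check that a scalar rank-one projection from $M_k(\C) \subset M_k(A/I)$ registers as $[1_{A/I}]_0$ and that the diagonal embedding $a \mapsto a \otimes 1_k$ multiplies $K_0$-classes by $k$.
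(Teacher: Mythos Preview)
Your proof is correct and follows essentially the same route as the paper's: track $\phi(x)$ through the expansion map $E$ and the quotient by $I$, then compare the $K_0$-classes of a spectral projection at the two endpoints to force $[1_{A/I}]_0$ into $k \cdot K_0(A/I)$. The only cosmetic difference is that the paper writes down the single averaging projection $P_t = \tfrac{1}{k}\sum_{j=0}^{k-1} (c^{-1}u_t)^j$ explicitly, whereas you invoke holomorphic functional calculus to produce all spectral projections $q_j(t)$ at once; the endpoint analysis is identical.
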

\begin{proof}
If $\phi: A \to J(A, \beta)$ is $(\alpha, \widetilde{\alpha})$-equivariant, then $\phi(x)$ determines a path of unitaries $u_t \in A \rtimes_\beta \Z/k\Z$ with $u_t^k = 1$, connecting $u_0 \in C^*(\Z/k\Z)$ to $u_1 \in A$. Now, $u_t$ is also in the $\mu$-isotypic subspace of $A \rtimes_\beta \Z/k\Z$ for $\alpha \widehat{\beta}$, so $u_0$ is of the form $c \mu$ for some $c \in \C$ with $c^k = 1$, and $u_1$ is in the $\mu$-isotypic subspace of $A$ for $\alpha$. Let $v_t = c^{-1} u_t$ and define the projections
\beu
P_t = \cfrac{1+v_t+v_t^2+\ldots+v_t^{k-1}}{k}
\eeu
for $t \in [0,1]$. Apply the expansion map $E$ of (\ref{eq:expansionmapeq}) and the entrywise quotient $A \to A/I$, where $I$ identifies any $a \in A$ with $\beta_m(a)$ for any $m$. Then the quotient of $E(P_t)$ produces a path of projections in $M_k(A/I)$ connecting a matrix $T \in M_k(\C)$ to a diagonal matrix with entries $a + I, a + I, \ldots, a + I$ for some $a \in A$. The matrix $T$ is of the form $\cfrac{1}{k} \sum\limits_{n=0}^{k-1} S^k$ for an order $k$ shift $S \in U_k(\C)$. Since $S$ has eigenvalues $1, \omega, \ldots, \omega^{k-1}$ for a primitive $k$th root of unity $\omega$, it follows that $T$ has rank $1$, and $T$ is equivalent in $K_0(A/I)$ to the unit $1$. On the other hand, the diagonal matrix with entries $a + I, \ldots, a + I$ is equivalent in $K_0(A/I)$ to the sum of $a+I$ with itself $k$ times. Therefore, $1 \in K_0(A/I)$ is divisible by $k$.
\end{proof}

In Proposition \ref{prop:connectedness}, the assumptions imply that $\alpha$ is free and $\beta$ is not free. Below we write a commutative subcase of the result.

\begin{cor}
Fix $k \geq 2$ and let $X$ be a disconnected compact Hausdorff space. Let $\alpha$ and $\beta$ be two commuting actions of $\Z/k\Z$ on $X$ such that $\text{Fix}(\beta)$ is nonempty and there is a clopen set $Y \subseteq X$ such that $Y \cup \alpha_1(Y) \cup \ldots \cup \alpha_{k-1}(Y) = X$ and the union is disjoint. Then there is no $(\alpha, \widetilde{\alpha})$-equivariant unital $*$-homorphism from $C(X)$ to $J(C(X), \beta)$.
\end{cor}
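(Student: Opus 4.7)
The plan is to reduce the statement directly to Proposition \ref{prop:connectedness} applied to $A = C(X)$; what remains is to verify each of its hypotheses in the commutative setting.

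To see that $\alpha$ is free on $X$ (equivalently on $C(X)$), suppose some $\alpha_j$ with $j \not\equiv 0 \pmod{k}$ fixed a point $p$. Choosing the unique $i$ with $p \in \alpha_i(Y)$, one would also have $p = \alpha_j(p) \in \alpha_{i+j}(Y)$, contradicting the disjointness of the pieces. For the required finite-order unitary, set $\omega = e^{2\pi i /k}$ and choose the generator $\mu$ of $\widehat{\Z/k\Z}$ so that $\mu$ sends the generator of $\Z/k\Z$ to $\omega$. Define
\beu
x = \sum_{j=0}^{k-1} \omega^{-j}\, \chi_{\alpha_j(Y)} \in C(X).
\eeu
Each $\alpha_j(Y)$ is clopen, so $x$ is continuous, and the disjointness of these pieces yields $x^k = 1$ and $|x| \equiv 1$. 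With the standard convention $(\alpha_1 \cdot f)(y) = f(\alpha_1^{-1}(y))$, for $y \in \alpha_j(Y)$ one has $\alpha_1^{-1}(y) \in \alpha_{j-1}(Y)$, whence $(\alpha_1 \cdot x)(y) = \omega^{-(j-1)} = \omega\, x(y)$. Thus $x$ sits in the $\mu$-isotypic subspace.

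Finally, pick any $p \in \mathrm{Fix}(\beta)$, which exists by assumption. The evaluation $\mathrm{ev}_p : C(X) \to \mathbb{C}$ sends each generator $\beta_m(f) - f$ of $I$ to $f(\beta_m^{-1}(p)) - f(p) = 0$, so $\mathrm{ev}_p$ factors through a unital $*$-homomorphism $\pi : A/I \to \mathbb{C}$. In particular $1 \notin I$ (so $I$ is proper), and the induced map $K_0(A/I) \to K_0(\mathbb{C}) = \mathbb{Z}$ carries $[1]$ to $1$, which is not divisible by $k$ for $k \geq 2$. All hypotheses of Proposition \ref{prop:connectedness} are now in place, giving the conclusion. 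The only step requiring any care is the bookkeeping of the isotypic convention in constructing $x$; beyond that, the argument is a routine dictionary translation between the topological hypotheses on $X$ and the operator-algebraic hypotheses on $C(X)$.
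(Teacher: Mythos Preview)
Your argument is correct and follows essentially the same route as the paper: construct the order-$k$ unitary from the clopen partition, use a fixed point of $\beta$ to see that $I$ is proper and that $[1]$ is indivisible by $k$ in $K_0(A/I)$, and then invoke Proposition~\ref{prop:connectedness}. The only differences are cosmetic---you verify freeness of $\alpha$ explicitly (which is in any case implied by the unitary you build) and pass directly to a point evaluation rather than through the identification $C(X)/I \cong C(\mathrm{Fix}(\beta))$.
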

\begin{proof}
Fix a primitive $k$th root of unity $\mu$, and define a function $f$ on $X$ by assigning $f(p) = \mu^m$ if and only if $y \in \alpha^m(Y)$. Then $f \in C(X)$ is a unitary in the $\mu$-isotypic subspace of $\alpha$, such that $f^k = 1$. Since $Z := \text{Fix}(\beta)$ is not empty, the ideal in $C(X)$ generated by $\beta_{m}(a) - a$, $0 \leq m \leq k -1$, $a \in C(X)$ is proper. Specifically, $C(X) / I \cong C(Z)$. Next, there is a map $K_0(C(Z)) \to K_0(\C) \cong \Z$ induced by evaluation at a point $q \in Z$, which sends the unit $1 \in C(Z)$ to $1$, so $1$ is indivisible by $k$ in $K_0(C(Z))$. Finally, we may apply Proposition \ref{prop:connectedness}.
\end{proof}

If $k$ is prime, note that $\text{Fix}(\beta)$ is empty if and only if $\beta$ is free. Therefore, these results also fall under the umbrella of Question \ref{ques:thequestion}, condition $1$.  Finally, following the $K$-theoretic computations in \cite{self16thetasphere, self17antisphere}, we see that $\theta$-deformed spheres, and certain twisted unreduced suspensions thereof, admit twisted Borsuk-Ulam theorems. First, we recall the definitions, as in \cite{mat91, nat97, self17antisphere}.

\begin{defn}\label{defn:spheres}
For an antisymmetric matrix $\theta \in M_n(\R)$, let
\beu
C(\S^{2n-1}_\theta) := C^*(z_1, \ldots, z_n \hspace{3 pt} | \hspace{3 pt} z_k \text{ normal}, \hspace{3 pt} z_k z_j = e^{2 \pi i \theta_{jk}} z_j z_k, \hspace{3 pt} z_1 z_1^* + \ldots + z_n z_n^* = 1)
\eeu
denote the $\theta$-sphere of dimension $2n - 1$. If $\theta$ is such that $z_n$ is central, then for $\rho$ the top left $(n-1)\times(n-1)$ minor of $\theta$, let
\beu
C(\S^{2n-2}_\rho) := C(\S^{2n-1}_\theta)/\langle z_n - z_n^* \rangle
\eeu
denoted the $\rho$-sphere of dimension $2n - 2$. If $\theta$ is instead such that $z_n$ anticommutes with all of the other $z_i$, then let
\beu
\mathcal{R}^{2n-2}_\rho := C(\S^{2n-1}_\theta)/\langle z_n - z_n^* \rangle
\eeu
denote the \lq\lq twisted\rq\rq\hspace{0pt} analogue of the $\rho$-sphere of dimension $2n -2$. Each of the above objects admits an \textit{antipodal action} of $\Z/2\Z$, denoted $\alpha$, which negates each generator in the presentation. For appropriate choices of $\rho$ and $\omega$, we have that
\beu
C(\S^{2n-2}_\rho) \cong C(\S^{2n-3}_\omega) \circledast C(\Z/2\Z) \cong J(C(\S^{2n-3}_\omega), \text{triv})
\eeu
and
\beu
\mathcal{R}^{2n-2}_\rho \cong J( C(\S^{2n-3}_\omega), \alpha).
\eeu
In both cases, the antipodal action on the $(2n-3)$-dimensional $\omega$-sphere induces the antipodal action on the (twisted) join, in the usual way.
\end{defn} 

The antipodal action $\alpha$ is free, and both the trivial action and the antipodal action satisfy condition 2 of Question \ref{ques:thequestion}.

\begin{exam}(Consequences of \cite[Corollary 4.12]{self16thetasphere} and \cite[Theorem 1.8]{self17antisphere})\label{exam:spherestuff}
Let $A$ be an algebra in Definition \ref{defn:spheres} of dimension $k$, and let $B$ be an algebra in Definition \ref{defn:spheres} of dimension $k+1$. If $\Z/2\Z$ acts on $A$ and $B$ via the antipodal action, then there is no equivariant, unital $*$-homomorphism from $A$ to $B$.

Let $C$ be another algebra in Definition \ref{defn:spheres} of dimension $k$, and suppose $\phi: A \to C$ is equivariant. If $k$ is odd, then the $K_1$-groups of both algebras are isomorphic to $\Z$, and the induced map $\phi_*$ on $K_1$ is nontrivial. If $k$ is even, then the $K_0$-groups of both algebras are isomorphic to $\Z \oplus \Z$, with the first component generated by the unit $1$, and the induced map $\phi_*$ on $K_0$ is such that $\textrm{Ran}(\phi_*)$ is not cyclic.
\end{exam}

\section{Existence}\label{sec:existence}

Recall the following assumptions of Question \ref{ques:thequestion} for commuting $\Z/k\Z$-actions $\alpha$ and $\beta$ on a unital $C^*$-algebra $A$, where $\alpha$ is free. 

\begin{enumerate}
\item The action $\beta$ is not free.
\item The individual automorphisms of $\beta$ are connected within $\mathrm{Aut}(A)$ to the trivial automorphism.
\end{enumerate}

When we discuss continuous paths in $\textrm{Aut}(A)$ or $\textrm{Hom}(A, B)$, we will always mean continuous with respect to the pointwise norm topology. Both conditions assert that $\beta$ is in some sense similar to the trivial action, and the conditions are motivated by examples and counterexamples from \cite{selfsaturatedPUBLISHED, self17antisphere} and the previous section. In particular, condition 2 may be thought of as the demand that $\beta$ is \lq\lq orientation-preserving.\rq\rq\hspace{0pt} However, we find that neither condition is sufficient to rule out $(\alpha, \widetilde{\alpha})$-equivariant maps $A \to J(A, \beta)$.

\begin{thm}\label{thm:condition1fails}
Let $A = C(\mathbb{S}^1)$ be generated by the coordinate unitary $z$, and equip $A$ with the antipodal action $\alpha: z \mapsto -z$ and the conjugation action $\beta: z \mapsto z^*$ of $\Z/2\Z$. There is an $(\alpha, \widetilde{\alpha})$-equivariant, unital $*$-homomorphism from $A$ to $J(A, \beta)$. Since $\alpha$ is free and $\beta$ is not free, condition 1 of Question \ref{ques:thequestion} is insufficient.
\end{thm}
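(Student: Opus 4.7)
The plan is to build a unital $*$-homomorphism $\phi: C(\S^1) \to J(A, \beta)$ via the universal property of $C(\S^1)$: since this algebra is the universal unital $C^*$-algebra on one unitary, it suffices to exhibit a single unitary $u \in J(A,\beta)$ and set $\phi(z) := u$. The equivariance condition $\phi\circ\alpha = \widetilde{\alpha}\circ\phi$ reduces on the generator to $\widetilde{\alpha}(u) = -u$, i.e., $u$ must lie in the nontrivial spectral subspace of $\widetilde{\alpha}$. Writing $u(t) = a_0(t) + a_1(t)\mu$ with $a_i(t) \in C(\S^1)$, this spectral condition demands that $a_0(t,z)$ be odd in $z$ and $a_1(t,z)$ be even in $z$, while unitarity of $u(t)$ in $C(\S^1)\rtimes_\beta \Z/2\Z$ (using $\mu a = \beta(a)\mu$) reduces to the two pointwise scalar identities
\be
|a_0(t,z)|^2 + |a_1(t,z)|^2 = 1, \qquad a_0(t,z)\,\overline{a_1(t,\bar z)} + a_1(t,z)\,\overline{a_0(t,\bar z)} = 0.
\ee
The join boundary conditions force $a_0(0)$ and $a_1(0)$ to be constants, so in particular $a_0(0) = 0$, and they force $a_1(1) = 0$.

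My proposed explicit path is
\beu
u(t) = \sin(\pi t/2)\, z + i\cos(\pi t/2)\, \mu,
\eeu
so that $a_0(t,z) = \sin(\pi t/2)\, z$ is odd in $z$ and $a_1(t,z) = i\cos(\pi t/2)$ is constant (hence even). The first unitarity identity is $\sin^2 + \cos^2 = 1$; the second becomes $\sin(\pi t/2)\cos(\pi t/2)\bigl(-iz + iz\bigr) = 0$, using $\overline{i} = -i$. The boundary values $u(0) = i\mu \in C^*(\Z/2\Z)$ and $u(1) = z \in A$ are immediate, and $\widetilde{\alpha}(u) = \alpha\widehat{\beta}(u) = \alpha\bigl(\sin(\pi t/2)z - i\cos(\pi t/2)\mu\bigr) = -u$ yields the required equivariance.

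The only technical wrinkle, and what I would single out as the key insight, lies in the off-diagonal unitarity equation. The naive guess $a_0 = s(t)z$, $a_1 = c(t)$ with $s,c$ real produces a cross term proportional to $s(t)c(t)z$, which does not vanish; the remedy is precisely to place a factor of $i$ in $a_1$, so that complex conjugation under $\beta$ supplies the needed sign flip and the two terms cancel. Once this candidate is written down the verification is routine, but without the imaginary unit there is no way to connect the constant $\mu$-endpoint to an odd unitary in $C(\S^1)$ through a $\mu$-isotypic unitary path, which is what makes the construction nontrivial.
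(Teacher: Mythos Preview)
Your proposal is correct and is essentially the paper's own construction: up to the harmless reparametrization $t \mapsto \sin(\pi t/2)$, your unitary $u(t) = \sin(\pi t/2)\,z + i\cos(\pi t/2)\,\mu$ is exactly the element $f(t) = t z + i\sqrt{1-t^2}\,\mu$ that the paper produces (there written as $a_t + i b_t$ with $z = x+iy$). The only cosmetic difference is that the paper decomposes $z$ into its self-adjoint parts and verifies that $a_t = tx$ and $b_t = ty + \sqrt{1-t^2}\,\mu$ are \emph{commuting} self-adjoints with $a_t^2 + b_t^2 = 1$, which immediately gives both $uu^* = 1$ and $u^*u = 1$; your direct check of the two scalar identities handles $uu^*=1$, and strictly speaking $u^*u=1$ requires the companion pair (which here is equally trivial since $a_1$ is constant).
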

\begin{proof}
Let $z = x + iy$, so that $\alpha$ negates both $x$ and $y$, but $\beta$ fixes $x$ and negates $y$. Also, let $C^*(\Z/2\Z)$ be generated by the self-adjoint unitary $\mu$. It follows that in $A \rtimes_\beta \Z/2\Z$, $y \mu = - \mu y$ and $x \mu = \mu x$. The points $a_t, b_t \in A \rtimes_\beta \Z/2\Z$ defined by
\beu
a_t = tx, \hspace{1 in} b_t = ty + \sqrt{1 - t^2} \mu,
\eeu
for $t \in [0,1]$ are self-adjoint, commute with each other, and satisfy
\beu \ba
a_t^2 + b_t^2 	&= t^2 x^2 + \left(t^2 y^2 + t\sqrt{1-t^2}y\delta + t \sqrt{1 - t^2} \delta y + (1-t^2) \right)\\
					&= t^2(x^2 + y^2) + (1 - t^2) \\
					&= 1.
\ea \eeu
Since $a_0 = 0$ and $b_0 = \mu$ belong to $C^*(\Z/2\Z)$, and similarly $a_1 = x$ and $b_1 = y$ belong to $A$, it follows that $f(t) = a_t + i b_t$ is a unitary element in $J(A, \beta)$. Further, $\widetilde{\alpha}(f) = -f$, since $\widetilde{\alpha}$ is defined as the pointwise application of $\alpha \widehat{\beta}$, which negates $x$, $y$, and $\mu$ in $A \rtimes_\beta \Z/2\Z$. Finally, the unital $*$-homomorphism $\phi: A \to J(A, \beta)$ defined by $\phi(z) = f$ is $(\alpha, \widetilde{\alpha})$-equivariant.
\end{proof}

Next, consider the universal $C^*$-algebra 
\beu
A \cong C^*( x, y \hspace{2pt}|\hspace{2pt} x = x^*, y = y^*, x^2 + y^2 = 1).
\eeu
While $A$ is itself noncommutative, there is an obvious surjection from $A$ onto $C(\S^1)$. Moreover, $A$ admits a $\Z/2\Z$ action generated by 
\be\label{eq:antipodal_ns} 
\alpha: \begin{array}{ccc} x & \mapsto & -x \\ y & \mapsto & -y \end{array},
\ee
analogous to the antipodal action on the quotient $C(\S^1)$. Using a rotation argument motivated by the commutative quotient, we find that the automorphism which generates $\alpha$ is connected within $\mathrm{Aut}(A)$ to the trivial automorphism. Specifically, if $s, t \in \R$ have $s^2 + t^2 = 1$, then 
\beu \ba
(sx + ty)^2 + (-tx + sy)^2 &= (s^2 x^2 + st xy + st yx + t^2 y^2) + (t^2 x^2 - st xy - st yx + s^2 y^2) \\
								&= (s^2 + t^2)(x^2 + y^2) \\
								&= 1,
\ea \eeu
so there is an endomorphism $R_{s,t}: A \to A$ defined by $R_{(s,t)}(x) = sx + ty$ and $R_{(s,t)}(y) = -tx + sy$. The inverse of $R_{(s,t)}$ is $R_{(s,-t)}$, as
\beu
s(sx - ty) + t(tx + sy) = (s^2 + t^2)x = x,
\eeu
\beu
-t(sx - ty) + s(tx + sy) = (s^2 + t^2)y = y,
\eeu
and similarly for the reverse composition. Therefore, $R_{(s,t)}$ is an automorphism for each $(s, t) \in \S^1$. Finally, $\alpha_1 = R_{(-1,0)}$ is connected via a path $R_{(s,t)} \in \text{Aut}(A)$ to the trivial automorphism $R_{(1,0)}$.

\begin{thm}\label{thm:condition2fails}
Let $A = C^*(x, y \hspace{2pt}|\hspace{2pt} x = x^*, y = y^*, x^2 + y^2 = 1)$, equip $A$ with the action $\alpha$ of $\Z/2\Z$ that negates $x$ and $y$, and let $\alpha = \beta$. Then there is an $(\alpha, \widetilde{\alpha})$-equivariant, unital $*$-homomorphism from $A$ to $J(A, \beta)$. Since $\alpha$ is free and the automorphism generating $\beta$ is connected within $\text{Aut}(A)$ to the trivial automorphism, condition 2 of Question \ref{ques:thequestion} is insufficient.
\end{thm}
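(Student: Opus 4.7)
The plan is to adapt directly the explicit path construction from the proof of Theorem~\ref{thm:condition1fails}, exploiting the fact that the universal presentation of $A$ imposes no commutativity between $x$ and $y$. The structural feature I would use is that, because $\beta$ negates both generators, the canonical unitary $\mu \in C^*(\Z/2\Z)$ anticommutes with both $x$ and $y$ inside $A \rtimes_\beta \Z/2\Z$, which is exactly what makes the relevant cross-terms vanish.

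Concretely, I would try the same formulas
\beu
a_t \;=\; tx, \qquad b_t \;=\; ty + \sqrt{1-t^2}\,\mu, \qquad t \in [0,1],
\eeu
that succeeded in Theorem~\ref{thm:condition1fails}. Pointwise self-adjointness is immediate, and the anticommutation $y\mu + \mu y = 0$ collapses the expansion of $b_t^2$, yielding $a_t^2 + b_t^2 = t^2(x^2+y^2) + (1-t^2) = 1$. Since the only defining relations of $A$ are self-adjointness of $x$, $y$ and the sphere relation, the assignments $\phi(x) = (t \mapsto a_t)$ and $\phi(y) = (t \mapsto b_t)$ extend by universality to a unital $*$-homomorphism from $A$ into $C([0,1], A \rtimes_\beta \Z/2\Z)$.

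To finish, I would verify the two remaining conditions. First, the endpoint values $a_0 = 0, b_0 = \mu \in C^*(\Z/2\Z)$ and $a_1 = x, b_1 = y \in A$ place $\phi$ inside $J(A, \beta)$. Second, $\widetilde{\alpha}$ is the pointwise action of $\alpha \widehat{\beta}$, which negates each of $x$, $y$, and $\mu$, so $\widetilde{\alpha}(a_t) = -a_t$ and $\widetilde{\alpha}(b_t) = -b_t$; this matches $\phi(\alpha(x)) = -\phi(x)$ and $\phi(\alpha(y)) = -\phi(y)$, establishing $(\alpha, \widetilde{\alpha})$-equivariance.

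The main obstacle is more conceptual than computational: one has to recognize that the noncommutative enrichment of $A$ over $C(\S^1)$ removes the commutativity constraint $[a_t, b_t] = 0$ that was forced in Theorem~\ref{thm:condition1fails} by the need for $a_t + i b_t$ to be a unitary matching the generator $z$ of $C(\S^1)$. Here $x$ and $y$ are mapped independently, and no such relation is needed. The rotation path $R_{(s,t)}$ discussed before the theorem is not used in the construction itself; it serves only to confirm that the $\beta$ of the hypothesis genuinely satisfies condition~2 of Question~\ref{ques:thequestion}, thereby certifying that this second condition also fails to preclude equivariant maps $A \to J(A, \beta)$.
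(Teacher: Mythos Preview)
Your proof is correct and follows essentially the same approach as the paper: construct explicit self-adjoint paths $a_t, b_t$ in $A \rtimes_\beta \Z/2\Z$ whose squares sum to $1$ by exploiting the anticommutation of $\mu$ with the generators, then check the endpoint and equivariance conditions. The only cosmetic difference is that the paper distributes the $\mu$-term symmetrically, setting $a_t = tx + \tfrac{\sqrt{1-t^2}}{\sqrt{2}}\mu$ and $b_t = ty + \tfrac{\sqrt{1-t^2}}{\sqrt{2}}\mu$, whereas you place all of it in $b_t$; both choices work for the same reason, and your observation that the absence of a commutativity relation between $x$ and $y$ is what makes the direct reuse of the Theorem~\ref{thm:condition1fails} formulas possible is exactly right.
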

\begin{proof}
Let $C^*(\Z/2\Z)$ be generated by the self-adjoint unitary $\mu$, and define the self-adjoint elements $a_t, b_t \in A \rtimes_\beta \Z/2\Z$ by
\beu
a_t = tx + \cfrac{\sqrt{1-t^2}}{\sqrt{2}}\hspace{1pt} \mu, \hspace{1 in} b_t = ty + \cfrac{\sqrt{1-t^2}}{\sqrt{2}} \hspace{1pt} \mu.
\eeu
Since $\mu$ anticommutes with both $x$ and $y$, it follows that 
\beu \ba
a_t^2 + b_t^2 &= \left(t^2 x^2 + \cfrac{1-t^2}{2}\right) + \left(t^2 y^2 + \cfrac{1 - t^2}{2} \right) \\
				   &= t^2(x^2 + y^2) + (1 -t^2) \\
				   &= 1.
\ea \eeu
We also have that $a_0 = \frac{1}{\sqrt{2}} \mu = b_0 \in C^*(\Z/2\Z)$, and similarly, $a_1 = x$ and $b_1 = y$ belong to $A$. Therefore $f(t) = a_t$ and $g(t) = b_t$ define elements $f, g \in J(A, \beta)$. Since $f$ and $g$ are negated by $\widetilde{\alpha}$, the map $\phi: A \to J(A, \beta)$ defined by $\phi(x) = f$, $\phi(y) = g$ is an $(\alpha, \widetilde{\alpha})$-equivariant, unital $*$-homomorphism.
\end{proof}

Next, we expand upon some consequences of the two previous theorems.

\begin{remark}\label{rem:remarkathon}
In both Theorem \ref{thm:condition1fails} and Theorem \ref{thm:condition2fails}, the chosen equivariant morphism $A \to J(A, \beta)$ is such that evaluation at the $t = 1$ endpoint of the twisted join produces the usual embedding $A \hookrightarrow A \rtimes_\beta \Z/2\Z$. Further, since evaluation at $t = 0$ produces a map $A \to C^*(\Z/2\Z)$, which has a commutative codomain, and the largest commutative quotient of $A$ is $C(\S^1)$, there is a factorization
\beu
A \to C(\S^1) \xrightarrow{\Lambda} C^*(\Z/2\Z).
\eeu
The morphism $\Lambda$ is dual to a continuous function $\lambda: \Z/2\Z \to \S^1$, i.e. a selection of two points in $\S^1$. Since $\S^1$ is path connected, we may apply a path $\lambda_t$ connecting $\lambda = \lambda_0$ to $\lambda_1$, which selects two identical points. This produces a path connecting $\Lambda$ to a one-dimensional representation $A \to \C$. We conclude that the inclusion map $A \hookrightarrow A \rtimes_\beta \Z/2\Z$ may be connected to a one-dimensional representation.
\end{remark}

\begin{remark} Let $A$ be as in Theorem \ref{thm:condition2fails}, so $A$ again has $C(\S^1)$ as its largest commutative quotient. The commutator ideal of $A$ is invariant, and the induced action on the quotient $C(\S^1)$ is the antipodal action. However, if $\alpha = \beta$ is the antipodal action on $C(\S^1)$, then there is no $(\alpha, \widetilde{\alpha})$-equivariant unital $*$-homomorphism $C(\S^1) \to J(C(\S^1), \beta)$. In particular, $J(C(\S^1), \beta)$ is a twisted analogue of a $2$-sphere that appears in Definition \ref{defn:spheres} and Example \ref{exam:spherestuff}. Therefore, the failure of Borsuk-Ulam theorems is not preserved in quotient procedures.
\end{remark}

In the non-twisted setting, if a compact group $G$ acts on $A$, then existence of an equivariant morphism $A \to A \circledast C(G)$ is equivalent to the existence of a path in $\text{Hom}(A, A)$ between an equivariant endomorphism and a one-dimensional representation of $A$ (see, e.g., the more general result \cite[Lemma 2.5]{dab16revisitedARXIV}). Since the identity endomorphism is certainly equivariant for any action, it is of perhaps independent interest whether or not $\text{id}_A$ may be connected to a one-dimensional representation, regardless of the presence of an action. If such a path does exist, then $A$ is called \textit{unitally contractible}, as in \cite[Definition 2.6]{dab16revisitedARXIV}. In the commutative case $A = C(X)$, this property corresponds to contractibility of $X$. Moreover, it is immediate that if equivariant maps $A \to A \circledast C(G)$ cannot exist, then $A$ cannot be unitally contractible. To consider analogous concepts in the twisted setting, we extend the notion of unital contractibility \lq\lq modulo $k$\rq\rq\hspace{0pt} by consideration of finite-dimensional representations.

\begin{defn}
Let $A$ be a unital $C^*$-algebra $A$. Then $A$ is called \textit{unitally contractible modulo k} if the embedding $a \in A \mapsto a \otimes I_k \in A \otimes M_k(\C)$ is connected within $\mathrm{Hom}(A, A \otimes M_k(\C))$ to a $k$-dimensional representation, that is, a map $\rho \in \mathrm{Hom}(A, \C \otimes M_k(\C))$. Similarly, $A$ is called \textit{strongly unitally contractible modulo k} if the embedding $a \mapsto a \otimes I_k$ may be connected within $\mathrm{Hom}(A, A \otimes M_k(\C))$ to a one-dimensional representation $\rho \in \mathrm{Hom}(A, \C \otimes \C)$.
\end{defn}

\begin{exam}
Fix $k \geq 2$. The matrix algebra $M_k(\C)$ admits a free $\Z/k\Z$ action, and it is contractible modulo $k$, but it is not strongly unitally contractible modulo $k$.
\end{exam}
\begin{proof}
Let $V$ be diagonal with entries $1, \omega, \ldots, \omega^{k-1}$, where $\omega$ is a primitive $k$th root of unity. Conjugation by $V$ is a free $\Z/k\Z$ action of $M_k(\C)$, as the matrix $W$ which acts on the standard basis of $\C^n$ by $e_i \mapsto e_{i+1}$ is unitary and in the $\omega$-isotypic subspace of the action. Further, the embeddings $\textrm{id} \otimes 1$ and $1 \otimes \textrm{id}$ of $M_k(\C)$ into $M_k(\C) \otimes M_k(\C) \cong M_{k^2}(\C)$ are conjugate. That is, there is a unitary $U \in M_{k^2}(\C)$ such that for each $M \in M_k(\C)$, $U (M \otimes 1) U^* = 1 \otimes M$. Because the unitary group of $M_{k^2}(\C)$ is path connected, the two embeddings are connected via $M \mapsto U_t (M \otimes 1) U_t^*$, where $U_0 = I$, $U_1 = U$, and each $U_t$ is unitary. Therefore, $M_k(\C)$ is unitally contractible modulo $k$. However, $M_k(\C)$ has no one-dimensional representations, so it cannot be strongly unitally contractible modulo $k$.
\end{proof}

In analogy with \cite[Corollary 2.7]{dab16revisitedARXIV}, we seek a connection between Borsuk-Ulam theorems and (strong) unital contractibility of $A$ modulo the same $k$. Certainly, if $A$ has $K$-theory invariants which remain nontrivial under the quotient $G / kG$, then $A$ is not unitally contractible modulo $k$.

\begin{exam}
The circle $C(\S^1)$ is not unitally contractible modulo $k$ for any $k$. This holds even though Theorem \ref{thm:condition1fails} produces an equivariant map $C(\S^1) \to J(C(\S^1), \beta)$ such that evaluation at $t = 1$ gives the usual inclusion $A \hookrightarrow A \rtimes_\beta \Z/2\Z$. In particular, it is crucial for this example that $\beta$ is orientation-reversing.
\end{exam}

We may, however, adapt the counterexample in Theorem \ref{thm:condition2fails} to show the $C^*$-algebra $A$ used therein is \textit{strongly} unitally contractible modulo $2$.

\begin{exam}
The $C^*$-algebra $A = C^*(x, y \hspace{2pt}|\hspace{2pt} x = x^*, y = y^*, x^2 + y^2 = 1)$ is strongly unitally contractible modulo $2$.
\end{exam}
\begin{proof}
It is known from Theorem \ref{thm:condition2fails} that if $\alpha = \beta$ is the action generated by $x \mapsto -x$, $y \mapsto -y$, then there is an $(\alpha, \widetilde{\alpha})$-equivariant map $\phi: A \to J(A, \alpha)$. An examination of the proof shows that $\text{ev}_{t=1}(\phi(a)) = a$ for each $a \in A$. Expanding the crossed product $A \rtimes_\alpha \Z/2\Z$ via $E: a_0 + a_1 \mu \mapsto \begin{pmatrix} a_0 & a_1 \\ \alpha(a_1) & \alpha(a_0) \end{pmatrix}$ then shows that the embedding
\beu
\psi_1: a \in A \mapsto \begin{pmatrix} a  \\ & \alpha(a) \end{pmatrix} \in A \otimes M_2(\C)
\eeu
may be connected via a path $\psi_t \in \text{Hom}(A, A \otimes M_2(\C))$ to a homomorphism $\psi_0$ such that $\textrm{Ran}(\psi_0) \subseteq E(C^*(\Z/2\Z))$.

Both endpoints of the above path must be adjusted to show strong unital contractibility modulo $2$. First, the automorphism generating the action $\alpha$ is connected in $\mathrm{Aut}(A)$ via a path $\alpha_t \in \mathrm{Aut}(A)$ to $\alpha_0 = \mathrm{id}$. Therefore
\beu
a \in A \mapsto \begin{pmatrix} a  \\ & \alpha_t(a) \end{pmatrix} \in A \otimes M_2(\C)
\eeu
may be used to connect $\psi_1$ to the diagonal embedding $A \hookrightarrow A \otimes M_2(\C)$. Second, $\psi_0$ maps $A$ maps to a subalgebra of $M_2(\C)$ isomorphic to $C^*(\Z/2\Z)$, which is commutative. Using the factorization technique of Remark \ref{rem:remarkathon}, we see that $\psi_0$ may be connected to a one-dimensional representation. Gluing all of the constructed paths together shows that $A$ is indeed strongly unitally contractible modulo $2$.
\end{proof}

Finally, we have seen that the nontriviality of equivariant maps proved in \cite[Corollary 2.4]{selfsaturatedPUBLISHED} for $\Z/k\Z$ actions is removed when crossed products replace tensor products, or when a matrix algebra is introduced, even under a fairly stringent assumption on the twist.

\section*{Acknowledgments}

I am very grateful to Orr Shalit and Baruch Solel for their support during my current postdoc at Technion-Israel Institute of Technology, and to the organizers of the Simons Semester at IMPAN.

\bibliography{references2}

\end{document}